\newcommand{\Rn}{\mathbb{R}}
\newcommand{\Nn}{\mathbb{N}}
\newcommand{\Jac}{\mathop{\mathrm{Jac}}}
\newtheorem{Lemma}{Lemma}
\newtheorem{Theorem}{Theorem}
\newtheorem{Corollary}{Corollary}
\newtheorem{Example}{Example}
\newenvironment{proof}[1][Proof]{\textbf{#1.} }{\
\rule{0.5em}{0.5em}}
\begin{document}
\author{Janusz Gwo\'zdziewicz}
\title{Real Jacobian mates}
\maketitle


\begin{abstract}
Let $p$ be a real polynomial in two variables. 
We say that a polynomial~$q$ is a real Jacobian mate 
of $p$ if the Jacobian determinant of the mapping 
$(p,q):\mathbb{R}^2\to\mathbb{R}^2$
is everywhere positive. We present a class of polynomials that do 
not have real Jacobian mates. 
\end{abstract}

\section{Introduction}
This note is inspired by \cite{BF} where  Braun and dos Santos Filho proved
that every polynomial mapping $(p,q):\Rn^2\to\Rn^2$ with everywhere positive 
Jacobian determinant and such that $\deg p\leq 3$ is a global diffeomorphism.  

A pair of polynomials $p,q\in\Rn[x,y]$ such that the mapping $(p,q):\Rn^2\to\Rn^2$ 
has everywhere positive Jacobian will be called \emph{real Jacobian mates}. 
The key role in \cite{BF} plays the result that $p=x(1+xy)$ does not have a 
real Jacobian mate. 

This result is a special case of Theorem~\ref{glacial}.  
In Theorem~\ref{branches} a wide class of polynomials that 
do not have real Jacobian mates is characterized.  
In particular every polynomial such that its Newton polygon 
has an edge described in Corollary~\ref{edge} belong to this class. 
This gives a new proof of  \cite[Theorem~5.5]{BO} that polynomials 
of degree~4 with at least one disconnected level set do not have real Jacobian mates. 

\section{Glacial tongues}
\begin{Theorem}\label{glacial}
Let $p$ be a real polynomial in two variables
and let $B\subset A$, be the subsets of the real plane such that: 
\begin{itemize}
\item[\emph{(i)}] the set $B$ is bounded,
\item[\emph{(ii)}] for every $t\in\Rn$ the set $p^{-1}(t)\cap A$ is either empty,
or is contained in $B$, 
or is homeomorphic to a segment and its endpoints belong to $B$, 
\item[\emph{(iii)}] the border of $A$ contains a half-line.
\end{itemize}
Then for every $q\in\Rn[x,y]$ there exists $v\in\Rn^2$, such that  $\Jac(p,q)(v)=0$.
\end{Theorem}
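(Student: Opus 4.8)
The plan is to argue by contradiction: suppose some $q\in\Rn[x,y]$ has $\Jac(p,q)$ nowhere zero on $\Rn^2$. Then $\nabla p$ vanishes nowhere (a zero of $\nabla p$ would force a zero row in the Jacobian matrix), so every level set $p^{-1}(t)$ is a smooth $1$-submanifold; and by the inverse function theorem $F:=(p,q):\Rn^2\to\Rn^2$ is everywhere a local diffeomorphism, in particular locally injective, hence $F$ is constant on no connected subset of $\Rn^2$ having more than one point. Moreover, if $C$ is a connected component of some $p^{-1}(t)$ and $\gamma$ parametrises $C$ in the direction of the tangent field $(-p_y,p_x)$, then $\frac{d}{ds}\,q(\gamma(s))=\Jac(p,q)(\gamma(s))/|\nabla p(\gamma(s))|$; this is continuous and never vanishes, so it keeps a constant sign along the connected curve $C$, and therefore $q$ is strictly monotone along $C$.

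The heart of the matter — and the step I expect to be the main thing to notice — is that hypotheses (i)--(iii) alone force $p$ to be \emph{constant} on the half-line $\ell\subset\partial A$. I would prove this as follows. If $p$ were non-constant on $\ell$ then, being a non-constant polynomial on a half-line, it is unbounded there; choose $\xi_n\in\ell$ with $\|\xi_n\|\to\infty$ and, after passing to a sub-half-line, $p(\xi_n)\to+\infty$ (the case $p(\xi_n)\to-\infty$ is identical). Since $\ell\subset\partial A\subset\overline A$ and, for $n$ large, $\xi_n$ is far from the bounded set $B$, each such $\xi_n$ lies in $\overline{A\setminus B}$; using continuity of $p$ at $\xi_n$, pick $z_n\in A\setminus B$ so close to $\xi_n$ that $p(z_n)>p(\xi_n)-1$, whence $p(z_n)\to+\infty$. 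By (ii) the set $p^{-1}(p(z_n))\cap A$ is neither empty nor contained in $B$, hence is homeomorphic to a segment with both endpoints in $B$; let $e_n\in B$ be one of these endpoints, so $p(e_n)=p(z_n)\to+\infty$. But $(e_n)$ lies in the bounded set $B$, so a subsequence converges to some $e^{*}\in\Rn^2$, and continuity of $p$ gives $p(e^{*})=\lim_k p(e_{n_k})=+\infty$, which is absurd. Hence $p\equiv t_0$ on $\ell$ for some $t_0\in\Rn$. (Note here that $A\setminus B\neq\emptyset$, for otherwise $A\subseteq B$ would be bounded, contradicting (iii).)

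Next I would show $q$ is constant on $\ell$ as well. For any $z\in A\setminus B$, by (ii) the arc $p^{-1}(p(z))\cap A$ is homeomorphic to a segment with both endpoints in $B$; being connected it lies in a single component $C$ of $p^{-1}(p(z))$, and since $q$ is strictly monotone along $C$ the value $q(z)$ lies between the two endpoint values, hence in $q(B)$. Therefore $q$ is bounded on $A\setminus B$, hence on $\overline{A\setminus B}$, which — exactly as in the previous paragraph — contains every sufficiently distant point of $\ell$. So $q|_{\ell}$ is a polynomial in the affine parameter of $\ell$ that is bounded on a half-line, and is therefore constant, say $q\equiv s_0$ on $\ell$.

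Combining the last two steps, $\ell\subset p^{-1}(t_0)\cap q^{-1}(s_0)=F^{-1}(t_0,s_0)$, so the map $F$ is constant on the connected infinite set $\ell$ — contradicting the local injectivity of $F$ noted in the first paragraph. This contradiction proves the theorem. The only genuinely nonobvious ingredient is the middle step, that (ii) together with (iii) already pins $p$ down to a constant on $\ell$; the care required there is essentially bookkeeping — reading ``segment'' in (ii) as an arc with two honest endpoints, so that the point $e_n$ really exists and lies in $B$, and checking the compactness and continuity details behind $e_n\to e^{*}$.
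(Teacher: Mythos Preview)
Your proof is correct and follows essentially the same approach as the paper's: both argue by contradiction, use monotonicity of $q$ along the level arcs $p^{-1}(t)\cap A$ (whose endpoints lie in the bounded set $B$) to conclude that $(p,q)$ is bounded on $A$ and hence constant on the half-line in $\partial A$, contradicting local injectivity. The paper is slightly more streamlined in that it bounds $\Phi(A)=(p,q)(A)$ in a single stroke (each $\Phi(A_t)$ is a vertical segment with endpoints in $\Phi(B)$), so your separate ``middle step'' proving $p$ constant on $\ell$ via the sequence $(e_n)$ is more work than needed---the same endpoint observation already gives $p(A)\subset p(B)$ directly.
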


\begin{proof} 
Suppose that there exists 
$q\in\Rn[x,y]$ such that $\Jac(p,q)$ vanishes nowhere.
Under this assumption the mapping $\Phi=(p,q):\Rn^2\to\Rn^2$ is a local diffeomorphism.

Take any $t\in\Rn$ such that the set $A_t=p^{-1}(t)\cap A$ is nonempty.
If $A_t\subset B$ then $\Phi(A_t)\subset\Phi(B)$.
If $A_t$ is homeomorphic to a segment with endpoints in $B$
then the restriction of $\Phi$ to $A_t$ is a locally injective continuous mapping 
from the source $A_t$ which is homeomorphic to a segment to a vertical line $\{t\}\times\Rn$
which is homeomorphic to $\Rn$.  
By the extreme value theorem and the mean value theorem 
such a mapping is either increasing or decreasing.%
\footnote{Suppose that a continuous 
and locally injective function $f:[a,b]\to\Rn$ in neither increasing nor decreasing. 
Then there exist $x_1$, $x_2$ $x_3$, $a\leq x_1<x_2<x_3\leq b$ such that 
$f(x_1)\leq f(x_2) \geq f(x_3)$ or $f(x_1)\geq f(x_2) \leq f(x_3)$. 
By the extreme value theorem $f$ restricted to $[x_1,x_3]$ has a maximal or a minimal value 
at some point $c$ inside the interval $[x_1,x_3]$. Shrinking $[x_1,x_3]$, if necessary, 
we may assume that $f$ restricted to $[x_1,x_3]$ is injective. 
By the mean value theorem $f(x_3)\in f([x_1,c])$ or $f(x_1)\in f([c,x_3])$ which 
gives a contradiction.} 
Hence, the image $\Phi(A_t)$ is a vertical segment with endpoints that belong to $\Phi(B)$. 

Since $A$ is the union of the sets $A_t$ and $\Phi(B)$ is bounded, so it is $\Phi(A)$. 

\smallskip
Let $L$ be a half-line contained in the border of $A$. 
Because the mapping $\Phi$ is bounded on $A$ it is also 
bounded on $L$. Consequently the polynomials
$p$ and $q$ restricted to $L$ are constant
(because they behave on $L$ like polynomials in one variable). 
We arrived to a contradiction with the condition that $\Phi$ is locally injective.
\end{proof}

\medskip
Every set $A$ satisfying assumptions of Theorem~\ref{glacial}
will be called \textit{a glacial tongue with a straight border}. 

\begin{Example} 
Let $p=x(1+xy)$. In~\cite{BF} it is checked (Lemma~4.1 and Remark~1) that 
$A=\{(x,y)\in\Rn^2: 0<x<1, -\frac{1}{x}<y\leq -1\}$ is a glacial tongue with a 
straight border for the polynomial $p$.  Hence, $p$ does not have a Jacobian mate.  
\end{Example}


\section{Newton polygon and branches at infinity}

Let $p=\sum a_{i,j}x^iy^j$ be a nonzero polynomial. 
By definition the Newton polygon $\Delta(p)$ is the convex hull of the set $\{(i,j)\in\Nn^2:a_{i,j}\neq0\}$. 
An edge $S$ of the Newton polygon $\Delta(p)$ will be called an outer edge if it has a normal vector $\vec v=(v_1,v_2)$ directed outwards $\Delta(p)$ such that $v_1>0$ or $v_2>0$.  
If $v_1>0$ then $S$ will be called a right outer edge. With every right outer edge $S$
we associate a rational number $\theta(S)=v_2/v_1$ and call this number the slope of $S$. 

\begin{Example} 
The Newton polygon of $p=x+x^2+x^3y+y^2+x^3y^2+xy^4$ has $4$ outer edges. 
Three of them are right outer edges with slopes $-1$, $0$, and $2$.

\setlength{\unitlength}{12pt}
\begin{picture}(6,5)(0,0)
\thinlines
\put(0,0){\vector(1,0){4}}    \put(0,0){\vector(0,1){4}}
\put(0,0){\makebox(0,0){.}} \put(1,0){\makebox(0,0){.}}
\put(2,0){\makebox(0,0){.}} \put(3,0){\makebox(0,0){.}}
\put(0,1){\makebox(0,0){.}} \put(1,1){\makebox(0,0){.}} 
\put(2,1){\makebox(0,0){.}} \put(3,1){\makebox(0,0){.}} 
\put(0,2){\makebox(0,0){.}} \put(1,2){\makebox(0,0){.}} 
\put(2,2){\makebox(0,0){.}} \put(3,2){\makebox(0,0){.}}
\put(0,3){\makebox(0,0){.}} \put(1,3){\makebox(0,0){.}}
\put(2,3){\makebox(0,0){.}} \put(0,4){\makebox(0,0){.}} 
\put(3,3){\makebox(0,0){.}}
\thicklines
\put(0,2){\line(1,1){1}}
\put(1,3){\line(2,-1){2}}
\put(3,1){\line(0,1){1}}
\put(2,0){\line(1,1){1}}
\put(0,2){\line(1,-2){1}}
\put(1,0){\line(1,0){1}}
\end{picture}
\end{Example}

\medskip
The objective of this section is to describe branches at infinity of a curve $p(x,y)=0$ 
and associate with each branch a certain outer edge of the Newton polygon of $p$. 

Let $V=\{(x,y)\in\Rn^2:p(x,y)=0\}$. Assume that the curve $V$ is unbounded and consider 
a one-point algebraic compactification $\widehat \Rn^2=\Rn^2\cup\{\infty\}$ of the real plane
(see \cite[Definition~3.6.12]{BR}).
Then $\infty$ belongs to the Zariski closure of $V$ in $\widehat \Rn^2$.  By \cite[Lemma~3.3]{Mi}
in a suitably chosen neighborhood of $\infty$ the curve $V\cup\{\infty\}$ is the union of finitely many branches which intersect only at $\infty$. Each branch is homeomorphic to an open interval under an 
analytic homeomorphism $p:(-\epsilon,\epsilon)\to V\cup\{\infty\}$, $p(0)=\infty$.  

It follows from the above that after passing to coordinates $x$ and $y$ in $\Rn^2$ and substituting 
$s=t^{-1}$ in $p$ we get the following characterization of branches at infinity. 

\begin{Lemma}\label{structure}
Assume that  $V=\{(x,y)\in\Rn^2:p(x,y)=0\}$ is an unbounded polynomial curve. 
Then in a suitably chosen neighborhood of infinity in $\Rn^2$
the curve $V$ is the union of finitely many 
pairwise disjoint ``branches at infinity''. Each branch at infinity 
is homeomorphic to a union of 
two open intervals $(-\infty,-R)\cup(R,+\infty)$ under a homeomorphism 
$(x,y)=(\tilde x(t),\tilde y(t))$ which is given by Laurent power series 
\begin{eqnarray}
\tilde x(t)&=&a_kt^k+a_{k-1}t^{k-1}+a_{k-2}t^{k-2}+\cdots  \label{eq:x} \\
\tilde y(t)&=&b_lt^l+b_{l-1}t^{l-1}+b_{l-2}t^{l-2}+\cdots  \label{eq:y}
\end{eqnarray}
convergent for $|x|>R$.
\end{Lemma}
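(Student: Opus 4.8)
The plan is to make rigorous the passage from the one-point compactification $\widehat{\Rn}^2 = \Rn^2 \cup \{\infty\}$ described just above the statement. First I would invoke the cited structure theorem (\cite[Lemma~3.3]{Mi}): in a small neighborhood $U$ of $\infty$ in $\widehat{\Rn}^2$, the set $(V \cap \Rn^2) \cup \{\infty\}$ is a finite union of branches, pairwise meeting only at $\infty$, each the image of an analytic homeomorphism $\gamma : (-\epsilon,\epsilon) \to (V \cap \Rn^2)\cup\{\infty\}$ with $\gamma(0) = \infty$. Shrinking $U$ I may assume the branches are genuinely pairwise disjoint away from $\infty$, so that removing $\infty$ leaves each branch homeomorphic to $(-\epsilon,0)\cup(0,\epsilon)$, a disjoint union of two intervals; taking $V$-neighborhood of infinity in $\Rn^2$ to be $U \setminus \{\infty\}$ gives the ``finitely many pairwise disjoint branches at infinity'' assertion with no reference to the compactification.

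Next I would extract the Laurent series. By \cite[Definition~3.6.12]{BR} the algebraic compactification $\widehat{\Rn}^2$ carries a coordinate chart near $\infty$ in which the original coordinates $(x,y)$ are rational functions of the chart coordinates; concretely, after the substitution $s = t^{-1}$ indicated in the text, the branch parametrization $\gamma$ becomes, in the $(x,y)$ coordinates, a pair of functions analytic on a punctured neighborhood of $t=\infty$ — equivalently, meromorphic at $t=\infty$ because they are algebraic over the field of Laurent series and bounded-order by the rational-function description of the chart. Hence each coordinate of $\gamma$ expands as a Laurent series $\tilde x(t) = a_k t^k + a_{k-1}t^{k-1}+\cdots$ and $\tilde y(t) = b_l t^l + b_{l-1} t^{l-1}+\cdots$, convergent for $|t| > R$, i.e. for large $|x|$. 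That the parametrization reaches $\infty$ forces $\max(k,l) \ge 1$ (at least one coordinate is unbounded), and reparametrizing $t \mapsto ct$ or $t \mapsto t+c$ if needed normalizes the picture; the two ends $t \to +\infty$ and $t \to -\infty$ are exactly the two intervals $(-\infty,-R)\cup(R,+\infty)$ in the statement.

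The routine steps — expanding an analytic germ at $\infty$ as a convergent Laurent series, tracking that the convergence region $|t|>R$ corresponds to $|x|>R$ after possibly enlarging $R$, and checking disjointness after shrinking neighborhoods — I would dispatch briefly. The one genuine point requiring care, and the step I expect to be the main obstacle, is justifying that the branch parametrization is not merely continuous or real-analytic in a Puiseux-fractional-exponent sense but actually expands in \emph{integer}-power Laurent series: this is where the \emph{algebraic} (as opposed to merely semialgebraic or topological) nature of the compactification in \cite[Definition~3.6.12]{BR} is used, since it is this that pins the chart coordinates to rational functions of $x,y$ and thereby forbids fractional exponents in $t$. Once that is in hand, the lemma is just a restatement of the structure theorem in affine coordinates.
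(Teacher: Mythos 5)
Your proposal is correct and takes essentially the same route as the paper, which obtains Lemma~\ref{structure} directly from the preceding discussion: the algebraic one-point compactification of \cite[Definition~3.6.12]{BR}, the local branch structure at $\infty$ from \cite[Lemma~3.3]{Mi}, and the substitution $s=t^{-1}$. Your added justification that the expansions have integer exponents (the affine coordinates being rational in the chart coordinates, so each parametrized coordinate is meromorphic at $s=0$ rather than merely Puiseux) simply makes explicit a step the paper leaves implicit.
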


\begin{Lemma}\label{edges}
Keep the assumptions an notations of Lemma~\ref{structure}. 
If $a_k\neq0$, $b_l\neq0$ then $(k,l)$ is a normal vector 
to some outer edge of the Newton polygon of~$p$. 
\end{Lemma}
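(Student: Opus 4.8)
The plan is to substitute the parametrization $(x,y)=(\tilde x(t),\tilde y(t))$ of a single branch at infinity into the equation $p=0$ and read off the highest power of $t$. Before doing that I would record one consequence of the branch running \emph{to infinity}: as $|t|\to\infty$ the point $(\tilde x(t),\tilde y(t))$ leaves every compact subset of $\Rn^2$, so $|\tilde x(t)|\to\infty$ or $|\tilde y(t)|\to\infty$; and a Laurent series $a_mt^m+a_{m-1}t^{m-1}+\cdots$ with $a_m\neq0$ is unbounded as $|t|\to\infty$ exactly when $m>0$. Hence $k>0$ or $l>0$; in particular $(k,l)\neq(0,0)$.

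Now write $p=\sum a_{i,j}x^iy^j$. Since $a_k\neq0$ and $b_l\neq0$, the Laurent series $\tilde x(t)^i$ has highest-order term $a_k^i\,t^{ki}$ and $\tilde y(t)^j$ has highest-order term $b_l^j\,t^{lj}$, so each monomial $a_{i,j}\tilde x(t)^i\tilde y(t)^j$ contributes $a_{i,j}a_k^ib_l^j\,t^{ki+lj}$ in top degree, with a coefficient that is nonzero exactly when $(i,j)$ lies in the support $\{(i,j):a_{i,j}\neq0\}$ of $p$. Put $M=\max\{ki+lj:a_{i,j}\neq0\}$; then the coefficient of $t^M$ in $p(\tilde x(t),\tilde y(t))$ equals $\sum_{ki+lj=M}a_{i,j}a_k^ib_l^j$.

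Next I would use that the branch lies on $V$. The one-variable function $t\mapsto p(\tilde x(t),\tilde y(t))$ vanishes for all large $|t|$ and is there given by a convergent Laurent series, hence is identically zero; in particular its $t^M$-coefficient vanishes. If the maximum $M$ were attained at a single lattice point $(i_0,j_0)$ of the support, that coefficient would equal $a_{i_0,j_0}a_k^{i_0}b_l^{j_0}\neq0$, a contradiction. So the line $\ell=\{(i,j):ki+lj=M\}$ contains at least two distinct lattice points of $\Delta(p)$; since $M=\max\{ki+lj:(i,j)\in\Delta(p)\}$ the line $\ell$ is a supporting line, and as $(k,l)\neq(0,0)$ the intersection $\ell\cap\Delta(p)$ is an edge $S$ of $\Delta(p)$ with $(k,l)$ as an outward normal vector. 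Because $k>0$ or $l>0$, the edge $S$ is \emph{outer}, which proves the lemma.

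I expect the only delicate points to be the (standard) passage from ``vanishes on a half-line'' to ``is the zero Laurent series'', which is what legitimizes comparing coefficients, and the elementary convex-geometry fact that a supporting line of the lattice polygon $\Delta(p)$ meeting it in two distinct lattice points cuts out an edge rather than a vertex; the rest is bookkeeping with leading exponents.
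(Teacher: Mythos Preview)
Your argument is correct and follows essentially the same route as the paper: substitute the Laurent parametrization into $p$, note that the top-degree coefficient $\sum_{ki+lj=M}a_{i,j}a_k^ib_l^j$ must vanish so that the supporting line $\{ki+lj=M\}$ meets $\Delta(p)$ in at least two lattice points (hence an edge), and use that the branch escapes to infinity to conclude $k>0$ or $l>0$, making the edge outer. The only difference is that you are more explicit about the identity principle for Laurent series and the convex-geometry step, which the paper treats as evident.
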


\begin{proof}
Let $d=\max \{ki+lj:(i,j)\in\Delta(p)\}$. The polynomial $p$ can be written as a sum
$p=\sum_{ki+lj\leq d}c_{i,j}x^iy^j$.
Substituting $(x,y)=(\tilde x(t),\tilde y(t))$ to $p$ 
and collecting together the terms of the highest degree we get 
$$ 0=p(\tilde x(t),\tilde y(t))=
    \Bigl( \sum_{ki+lj=d}c_{i,j}a_k^ib_l^j\Bigr)t^d+\mbox{ terms of lower degrees}.
$$
The necessary condition for this identity is a cancellation of terms in the sum in parenthesis. 
If there are at least two distinct coefficients $c_{i_1,j_1}\neq0$, $c_{i_2,j_2}\neq0$, 
satisfying $ki_1+lj_1=ki_2+lj_2=d$ then the straight  line $\{(i,j)\in\Rn^2:ki+lj=d\}$ touches $\Delta(p)$ at a least 
two points, hence along the edge. 

Since $(x,y)=(\tilde x(t),\tilde y(t))$ is a Laurent parametrization of a branch at infinity, we have
$\|(\tilde x(t),\tilde y(t))\|\to\infty$ as $t\to+\infty$ which proves that at least one of exponents 
$k$, $l$ is positive and shows that $\Delta(p)\cap\{(i,j)\in\Rn^2:ki+lj=d\}$ is an outer edge.
\end{proof}

\medskip
Using Lemma~\ref{edges} 
we may associate to every branch at infinity of the curve $p=0$ the
unique outer edge of the Newton polygon of $p$. 
In the next lemma we will show that the slope of the associated edge characterizes 
the asymptotic of the branch at infinity. 

\medskip

For two real valued functions $g$, $h$ defined in some interval $(R,\infty)$ we will write 
$g(x)\sim h(x)$ if there exist constants $c>0$, $C>0$, and $r>0$ such that 
$c |h(x)|\leq |g(x)| \leq C|h(x)|$ for all $x>r$.

\begin{Lemma}\label{structure1}
Let $p(x,y)$ be a nonzero real polynomial such  that for every~$x_0$ the set 
$X=\{\,(x,y)\in\Rn^2: x>x_0,\,y>0,\,p(x,y)=0\,\}$ is nonempty. 
Then for sufficiently large $x_0$ there exists a finite collection of continuous 
semialgebraic functions 
$f_k:(x_0,+\infty)\to\Rn$, $k=1,\dots, s$ such that 
\begin{itemize}
\item[\emph{(i)}] $0<f_1(x)<\dots<f_s(x)$ for $x>x_0$, 
\item[\emph{(ii)}] $X$ is the union of graphs $\{\,(x,y)\in\Rn^2: y=f_k(x),\,x>x_0\,\}$, 
$k=1,\dots, s$,
\item[\emph{(iii)}] for every $f_k$ there exists a right outer edge $S_k$ of the Newton 
                            polygon of $p(x,y)$ such that  $f_k(x)\sim x^{\theta(S_k)}$.
\end{itemize}
\end{Lemma}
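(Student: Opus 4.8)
The idea is to combine the local structure of branches at infinity (Lemma~\ref{structure}) with the edge-assignment of Lemma~\ref{edges}, then produce the desired semialgebraic functions by a semialgebraic-selection argument and control their growth through the Laurent parametrizations.

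First I would set up the semialgebraic side. The set $X$ is semialgebraic, and the hypothesis says that its projection onto the $x$-axis contains every sufficiently large $x$ (since $X$ is nonempty for every $x_0$, its projection is an unbounded semialgebraic subset of $\Rn$, hence contains a half-line $(x_0,+\infty)$ after enlarging $x_0$). For each such $x$ the fibre $X_x=\{y: (x,y)\in X\}$ is a nonempty semialgebraic subset of $\Rn_{>0}$; it is finite because it is contained in the zero set of the one-variable polynomial $p(x,\cdot)$, which is not identically zero once $x$ is large (a nonzero polynomial in $x,y$ has only finitely many $x$ for which $p(x,y)\equiv 0$ in $y$). By the semialgebraic cell-decomposition / curve-selection machinery (e.g.\ the semialgebraic version of the implicit function theorem, or decomposing $X$ into graphs of continuous semialgebraic functions), after enlarging $x_0$ I can write $X$ as a finite union of graphs $y=f_k(x)$, $x>x_0$, with each $f_k$ continuous and semialgebraic, and with $f_1(x)<\dots<f_s(x)$ for all $x>x_0$ (the strict ordering holds because the $f_k$ are distinct continuous functions on a connected interval, so they cannot cross without coinciding on a whole subinterval, which would force equality). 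This gives (i) and (ii).

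Next I would extract the asymptotics, which is item (iii). Fix one graph $G_k=\{y=f_k(x)\}$. It is an unbounded branch of the curve $p=0$ lying in $\{x>x_0,\ y>0\}$, so by Lemma~\ref{structure} it is covered, near infinity, by finitely many branches at infinity of $V$ with Laurent parametrizations $(\tilde x(t),\tilde y(t))$ as in \eqref{eq:x}--\eqref{eq:y}; since $G_k$ is itself connected and unbounded, one such branch (or the $t\to+\infty$ half of one such branch, after possibly replacing $t$ by $-t$) already covers $G_k$ near infinity. On that branch $\tilde x(t)=a_kt^k+\cdots$ with $a_k\neq 0$, and since $x=\tilde x(t)\to+\infty$ we get $k>0$ and $\tilde x(t)\sim t^k$. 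If $b_l=0$ for the $y$-coordinate, i.e.\ $\tilde y(t)$ is identically zero, that contradicts $y>0$ on $G_k$; so $b_l\neq 0$ and $\tilde y(t)\sim t^l$ (with the sign of $b_l$ positive, since $y>0$). Then $f_k(x)=\tilde y(t)$ where $t\sim x^{1/k}$, so $f_k(x)\sim t^l\sim x^{l/k}$. By Lemma~\ref{edges} the vector $(k,l)$ is a normal vector to an outer edge $S_k$ of $\Delta(p)$, and since $k>0$ this edge is a right outer edge with slope $\theta(S_k)=l/k$. Hence $f_k(x)\sim x^{\theta(S_k)}$, which is (iii).

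**Main obstacle.** The routine-looking but genuinely delicate point is the passage from the abstract Laurent branches supplied by Lemma~\ref{structure} (which live in an unspecified neighbourhood of $\infty$ and are only \emph{homeomorphic} to $G_k$, not manifestly equal to it) to a clean statement that each semialgebraic graph $y=f_k(x)$ coincides near infinity with exactly one such branch, parametrized so that $x\to+\infty$. One has to check that the branches at infinity are compatible with the $x$-coordinate being the parameter — i.e.\ that $k\ge 1$ and that after shrinking the neighbourhood the map $t\mapsto\tilde x(t)$ is a monotone bijection onto a half-line $(x_0,+\infty)$ — so that $t$ can be solved in terms of $x$ as a (fractional) Puiseux series $t\sim x^{1/k}$ and substituted into $\tilde y(t)$; and one must make sure all finitely many graphs are handled by enlarging $x_0$ to a single common value. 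Everything else — the semialgebraic decomposition into ordered graphs, the finiteness of fibres, the $\sim$ bookkeeping $t^l\sim x^{l/k}$ — is standard and I would only sketch it.
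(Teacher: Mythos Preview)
Your proposal is correct and follows essentially the same route as the paper: cylindrical decomposition of the semialgebraic set $X$ yields the ordered graphs in (i)--(ii), and for (iii) each graph is identified with a half-branch at infinity, whose Laurent parametrization from Lemma~\ref{structure} gives $f_k(x)\sim x^{l/k}$, after which Lemma~\ref{edges} pins $l/k$ to the slope of a right outer edge. The paper is terser (it simply cites the Cylindrical Decomposition Theorem for (i)--(ii) and writes the identity $f_k(\tilde x(t))=\tilde y(t)$ for (iii)), while you spell out more of the bookkeeping, including the point you flag as the main obstacle; but the underlying argument is the same.
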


\begin{proof} 
Part~(i) and~(ii) follow from the Cylindrical Decomposition Theorem for semialgebraic sets 
(see for example \cite[Theorem~2.2.1]{BR}).  

To prove~(iii) observe that the graph of $f_k$ is unbounded and homeomorphic to an open interval. 
Thus, we may assume, increasing $x_0$ if necessary, that this graph is a half-branch at infinity.
By Lemma~\ref{structure} there exists a homeomorphism of an open interval $(R,+\infty)$ 
to the graph given by Laurent power series~(\ref{eq:x}),~(\ref{eq:y})
with $a_k\neq0$, $b_l\neq0$.  Since $\tilde x(t)\to+\infty$ for $t\to+\infty$,
the leading term of  $\tilde x(t)$ has a positive exponent $k$. 
By estimations  $\tilde x(t) \sim t^k$,  $\tilde y(t) \sim t^l$
and identity $f_k(\tilde x(t))=\tilde y(t)$ we get 
$f_k(x)\sim x^{l/k}$.  
Finally, by Lemma~\ref{edges} there exists a right outer 
edge $S$ of the Newton polygon of $p$ such that $l/k=\theta(S)$.
\end{proof}

\section{Main result}

\begin{Theorem}\label{branches}
Assume that the Newton polygon of $p\in\Rn[x,y]$ has a right outer edge $S$ 
with endpoint $(0,1)$ and positive inclination and that the curve $p=0$ 
has a real branch at infinity associated with the edge $S$. 
Then $p$ has a glacial tongue with a straight border.
\end{Theorem}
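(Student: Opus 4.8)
The plan is to construct the set $A$ explicitly from the branch at infinity associated with $S$ and verify the three conditions of Theorem~\ref{glacial}. Since $S$ has endpoint $(0,1)$, the coefficient of $y^1$ along this edge is nonzero, so by Lemma~\ref{edges} the branch associated with $S$ has a Laurent parametrization with leading exponent $l=1$ in $\tilde y$; since the inclination (slope) $\theta(S)=l/k$ is positive and $k>0$, we must have $k\geq 1$, so the branch goes to infinity with $x\to+\infty$ (after possibly replacing $x$ by $-x$, which does not change the hypotheses) and $y>0$ along it. By Lemma~\ref{structure1}, applied after this normalization, for large $x_0$ the branch is the graph $y=f(x)$ of a continuous semialgebraic function with $f(x)\sim x^{\theta(S)}$, and because $(0,1)$ is an endpoint of $S$, this particular branch is the topmost graph, i.e. it equals $f_s$ in the notation of Lemma~\ref{structure1} — indeed the edge $S$ with the largest slope among right outer edges corresponds to the fastest-growing branch. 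Let $t_0=p(x_0,\text{something on the branch})$; more precisely set $c=f_s(x_0)$ and consider the level value. I would then take
\begin{eqnarray*}
A&=&\{(x,y)\in\Rn^2: x>x_0,\ 0<y\leq f_s(x)\},\\
B&=&\{(x,y)\in\Rn^2: x_0<x\leq x_1,\ 0\leq y\leq f_s(x)\}
\end{eqnarray*}
for a suitable $x_1>x_0$, or — more robustly — let $B$ be a large closed ball intersected with $A$'s closure; the exact choice of $B$ is one place where care is needed.

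The heart of the argument is condition~(ii): for each $t$, the set $p^{-1}(t)\cap A$ is either empty, contained in $B$, or a segment-like arc with endpoints in $B$. The key geometric fact is that the branch $y=f_s(x)$ is a connected piece of the level set $p^{-1}(t_0)$ for the single value $t_0$ of $p$ along it — wait, that is not automatic, so instead the right observation is: on $A$ the polynomial $p$, restricted to each vertical line $\{x\}\times(0,f_s(x)]$, is a polynomial in $y$ of bounded degree, and the level sets $p^{-1}(t)\cap A$ are semialgebraic curves. By the cylindrical decomposition (Lemma~\ref{structure1} applied to $p-t$, or rather a uniform version), for $x$ large each level set $p^{-1}(t)\cap A$ that is nonempty and unbounded must itself be a graph over $x$ that is a half-branch at infinity of $p-t$; but the Newton polygon of $p-t$ coincides with that of $p$ in the region relevant to right outer edges of positive slope (subtracting the constant $t$ only possibly adds the lattice point $(0,0)$, which lies below the edge $S$ since $S$ has positive slope and endpoint $(0,1)$), so $p-t$ has the same topmost branch asymptotic $\sim x^{\theta(S)}$. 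Hence any unbounded component of $p^{-1}(t)\cap A$ would have to be asymptotic to $x^{\theta(S)}$ and lie below $f_s$, forcing it to be the branch of $p=0$ itself, i.e. $t=0$ — so for $t\neq 0$ the set $p^{-1}(t)\cap A$ is bounded, and for $t=0$ it is contained in a bounded neighborhood of the branch minus the branch, hence bounded. A bounded semialgebraic subset of a level set inside $A$ is a finite union of arcs; choosing $B$ large enough (e.g. a closed half-disk of radius big enough to contain all these bounded pieces, which by compactness of the parameter region and semialgebraicity can be done uniformly in $t$) makes each such set contained in $B$, and for the segment case the endpoints lie on the boundary of $A$, which near infinity consists of the curve $y=f_s(x)$, the line $y=0$, and the line $x=x_0$ — only the $x=x_0$ part is not in $B$, but $x=x_0$ is itself part of the compact slice, hence in $B$.

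Conditions~(i) and~(iii) are then easy: $B$ is bounded by construction, and the border of $A$ contains the half-line $\{(x,0):x>x_0\}$, which is the straight border. I would finish by invoking Theorem~\ref{glacial}. The main obstacle I anticipate is making condition~(ii) rigorous and uniform in $t$: one must rule out that for some value $t\neq 0$ the curve $p^{-1}(t)$ has an unbounded piece sneaking into the region below $f_s$ and above $y=0$, or that infinitely many level sets contribute unbounded-but-bounded-in-$A$ pieces whose "heads" escape any fixed ball $B$. The Newton-polygon comparison between $p$ and $p-t$ together with Lemma~\ref{structure1} (which already packages the needed semialgebraic triviality) is the tool that resolves this, but the bookkeeping — confirming $(0,0)$ does not create a new right outer edge with slope $\geq\theta(S)$, and that "the topmost branch of $p-t$" really is forced to coincide with the $p=0$ branch when it lives in $A$ — is where the proof will spend most of its effort. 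A secondary subtlety is the reduction to the case $x\to+\infty$, $y>0$: the edge $S$ with endpoint $(0,1)$ and positive slope guarantees $l=1>0$ and $k>0$, and after the sign change in $x$ we may assume the branch has $x\to+\infty$; that the branch has $y>0$ eventually follows because $b_l=b_1\neq 0$ has a definite sign and we orient the parametrization so that $t\to+\infty$ gives $y>0$ (the other end $t\to-\infty$ gives the opposite-sign half-branch, which we discard or handle by the symmetry $y\mapsto -y$ if it also lies over $x>x_0$).
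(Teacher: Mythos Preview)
Your proposal has two genuine errors that prevent it from going through.

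\medskip
\textbf{First, you misread the edge geometry.} ``Positive inclination'' in the statement refers to the slope of the segment $S$ in the $(i,j)$--plane, not to $\theta(S)$. If $S$ runs from $(0,1)$ to $(a,b)$ with $a>0$ and $b>1$, then the outward normal is proportional to $(b-1,-a)$, so $\theta(S)=-a/(b-1)<0$. In particular the associated half-branch satisfies $f(x)\sim x^{\theta(S)}\to 0$, and $S$ has the \emph{smallest} slope among the right outer edges of $\Delta(p)$, not the largest. Hence the branch you want is the lowest one, $f=f_1$, and the strip $\{0<y<f_1(x)\}$ contains no zeros of $p$. Your choice $f=f_s$ would leave other branches of $p=0$ inside the strip, destroying the sign argument. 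Relatedly, your claim that $(0,0)$ ``lies below the edge $S$'' is backwards: the supporting half-plane of $S$ is $\{(b-1)i-aj\le -a\}$, and $(0,0)$ gives value $0>-a$, so $(0,0)$ is on the \emph{outward} side. Adding it (i.e.\ passing from $\Delta(p)$ to $\Delta(p-t)$) replaces $S$ by an edge from $(0,0)$ with strictly larger $\theta$. This is exactly what makes the boundedness argument work---every right outer edge of $\Delta(p-t)$ has slope $>\theta(S)$, so no half-branch of $p=t$ can fit below $f_1$---but it is the opposite of what you assert.

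\medskip
\textbf{Second, your plan for condition~(ii) of Theorem~\ref{glacial} cannot be made uniform.} You propose to choose $B$ ``large enough to contain all these bounded pieces'' and appeal to compactness of the parameter region. But $t$ ranges over all of $\Rn$, and as $t\to 0^+$ the curve $p^{-1}(t)\cap V$ approaches the unbounded branch $y=f_1(x)$; the sets $V_t$ are bounded for each $t\neq 0$ but not uniformly so. No fixed bounded $B$ can contain them all. The paper handles this by a genuinely different mechanism: after ruling out critical points and closed components in $V$, one shows that for $0<t\le t_0$ each $V_t$ is a single arc whose endpoints lie on the vertical segment $\{x_0\}\times(0,f_1(x_0))$, while for $t>t_0$ the set $V_t$ is trapped inside the bounded Jordan region enclosed by $V_{t_0}$ together with that vertical segment. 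Thus the ``segment with endpoints in $B$'' clause of condition~(ii), not the ``contained in $B$'' clause, carries the unbounded family. Your outline never produces this arc-with-endpoints structure, and that is the missing idea.
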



\begin{proof}
Without loss of generality we may assume, changing signs of variables if necessary,  
that one of half-branches associated with the edge $S$ lies in the positive quadrant $x>0$, $y>0$.  

Then, under notation of Lemma~\ref{structure1} this half-branch at infinity is a graph $y=f(x)$
where $f$ is one of the functions $f_k$, $k=1,\dots,s$. 
Comparing the asymptotic of these functions 
we see that $\theta(S_1)\leq\theta(S_2)\leq\dots\leq \theta(S_s)$.
Since $S$ has the smallest slope among all right outer edges of the Newton polygon $\Delta(p)$, 
we have $S=S_1$ and we may assume that $f(x)=f_1(x)$. 

Let 
$$V=\{(x,y)\in\Rn^2:x>x_0,\, 0<y<f(x)\} .$$
The polynomial $p$ vanishes nowhere on $V$, 
hence without loss of generality we may assume that $p$ is positive on this set.  

\medskip
\noindent
\textbf{Claim~1.} For every $t\neq0$ the set $p^{-1}(t)\cap V$ is bounded. 

\smallskip
\textbf{Proof of 1.}
If not, then by the Curve Selection Lemma there exists a half-branch at infinity of a curve $p(x,y)=t$ 
contained in $V$. Let $y=g(x)$ be the graph of this half-branch at infinity. By Lemma~3  
$g(x)\sim x^{\theta(S_1)}$, where $S_1$ is one of the right outer edges of the Newton polygon  $\Delta(p-t)$.
By inequalities $0<g(x)<f(x)$ we get $\theta(S_1)\leq \theta(S)$.  
This is impossible because all right outer edges of $\Delta(p-t)$ have slopes bigger 
than the slope of $S$.

\medskip
\noindent
\textbf{Claim~2.} For $x_0$ sufficiently large, $V$ does not contain any critical point of~$p$.

\smallskip
\textbf{Proof of 2.} 
If the intersection of $V$ with the set of critical points 
is bounded then it is enough to enlarge $x_0$. If this intersection is unbounded then 
by the Curve Selection Lemma it contains  an unbounded semi-algebraic arc $\Gamma\subset V$. 
It follows that $p$ restricted to $\Gamma$ is constant and nonzero -- contrary to Claim~1.

\medskip
Further, we will assume that $V$ satisfies assumptions of Claim~2.
Then every level set $p^{-1}(t)$ intersected with $V$ 
is a one-dimensional smooth semialgebraic manifold.  
By Poincare-Bendixon Theorem 
$V_t=p^{-1}(t)\cap V$ has a finite number of connected 
components, each homeomorphic to a circle or to an open interval. 

\medskip
\noindent
\textbf{Claim~3.}
 There is no connected component of $V_t$ homeomorphic to a circle. 

\smallskip
\textbf{Proof of 3.} 
Suppose there is. Then by Jordan's Theorem it cuts the set $V$ to two open regions. 
One of these regions is bounded. Since the function $p$ is constant on the boundary of this region, 
it attains an extreme value at 
some point inside.  This is impossible because $p$ has no critical points in the set $V$. 

\medskip
Let $h(y)=p(x_0,y)$ be the restriction of $p$ to the vertical line $\{x_0\}\times \Rn$.
A function $h$ vanishes at the endpoints of the interval $[0,f(x_0)]$ and is positive
inside.  It is easy to find $t_0>0$ and two points $a<b$ inside the interval $[0,f(x_0)]$ such that:\\
$h'(y)\neq0$ for $y\in(0,a]\cup [b,f(x_0))$,\\
$h$ increases from $0$ to $t_0$ in the interval $[0,a]$, \\
$h(y)>t_0$ for  $a<y<b$, \\
$h$ decreases from $t_0$ to $0$ in the interval $[b,f(x_0)]$.

\medskip
\noindent
\textbf{Claim~4.}
For every $t$ such that $0<t\leq t_0$ the set $V_t=p^{-1}(t)\cap V$ 
is connected and homeomorphic to an open interval. 
The topological closure of $V_t$ intersects  the vertical segment 
$\{x_0\}\times (0,f(x_0))$ at two points. 

%
\smallskip
\textbf{Proof of 4.} 
By the discussion proceeding Claim~4 the polynomial~$p$ attains value~$t$ 
precisely at two points of the boundary of $V$. 
These are $(x_0,y_1)$ and $(x_0,y_2)$, where $0<y_1\leq a$ and $b\leq y_2<f(x_0)$.
Moreover $\partial p/\partial y$ does not vanish at these points. 

By Claim~2 and Claim~3 the set $V_t$ is a one-dimensional smooth manifold having a 
finite number of connected component; each component 
is semialgebraic and homeomorphic to an open interval. Thus, the closure of $V_t$ is a graph 
with vertexes  $(x_0,y_1)$, $(x_0,y_2)$ and edges which are connected components of $V_t$. 

By the Implicit Function Theorem the closure of $V_t$ has in a small neighborhood of
$(x_0,y_i)$, where $i=1,2$  a topological type of an interval $[0,1)$ which shows that 
there is exactly one edge which connects $(x_0,y_1)$ and $(x_0,y_2)$.

\medskip
By Claim~4 the closure of $V_{t_0}$ is a line with two endpoints: $(x_0,a)$ and $(x_0,b)$.
Joining them by a vertical segment we get a non-smooth oval. By Jordan's Theorem 
this oval cuts the plane into two open regions.  Let $B_0$ be the bounded region,
let $B=B_0\cup\{x_0\}\times (0,f(x_0))$
and let $A=V\cup\{x_0\}\times (0,f(x_0))$.


\smallskip
If $t\leq 0$ then $A_t$ is empty. 
If $0<t\leq t_0$ then $A_t$ is homeomorphic to a line with endpoints at $\{x_0\}\times (0,f(x_0))$.
If $t>t_0$ then either $A_t$ is empty or the  closure of every connected component of $A_t$ 
intersects the border of $A$ along $x_0\times(a,b)$. In this case $A_t\subset B$. 
\end{proof}


\begin{Corollary}\label{edge}
Assume that the Newton polygon of a polynomial $p\in\Rn[x,y]$ has a right outer edge that: 
begins at $(0,1)$, has a positive inclination, and its only lattice points are the endpoints. 
Then $p$~does not have a real Jacobian mate. 
\end{Corollary}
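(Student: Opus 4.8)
\textit{Proof proposal.} The plan is to reduce the corollary to Theorem~\ref{branches}: once we show that the curve $p=0$ has a real branch at infinity associated with the edge $S$, Theorem~\ref{branches} hands us a glacial tongue with a straight border for $p$, and then Theorem~\ref{glacial} (which is, by definition, what ``glacial tongue with a straight border'' refers to) says that no $q\in\Rn[x,y]$ makes $\Jac(p,q)$ everywhere nonzero. So the whole task is to manufacture that real branch out of the combinatorial hypothesis, and the point is that the hypothesis forces the part of $p$ supported on $S$ to be \emph{linear in}~$y$, which makes the branch essentially free.

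Concretely, first I would pin down $S$. Being a right outer edge that begins at $(0,1)$, has positive inclination, and contains no lattice point other than its two endpoints, $S$ must run from $(0,1)$ to some point $(a,0)$ with $a\in\Nn$, $a\ge 1$; its outward normal is a positive multiple of $(1,a)$, so $\theta(S)=a$, the maximum of $i+aj$ over $\Delta(p)$ equals $a$ and is attained exactly on $S$. Since the only lattice points of $\Delta(p)$ on the line $i+aj=a$ are the two vertices of $S$, the part of $p$ supported on $S$ is $p_S(x,y)=\alpha y+\beta x^{a}$ with $\alpha=a_{0,1}\neq 0$ and $\beta=a_{a,0}\neq 0$ (endpoints of an edge of a Newton polygon lie in the support), and $p=p_S+\tilde p$ where every monomial $x^iy^j$ of $\tilde p$ satisfies $i+aj\le a-1$.

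Next I would produce the branch by a weighted implicit function argument (this is just Newton--Puiseux with trivial ramification, since $p_S$ is linear in $y$ with the simple real root $y=u_0x^a$, $u_0:=-\beta/\alpha$). Substituting $y=ux^{a}$ gives
\[
 x^{-a}\,p\bigl(x,u x^{a}\bigr)=(\alpha u+\beta)+\sum_{i+aj\le a-1}a_{i,j}\,u^{j}\,x^{\,i+aj-a}.
\]
Every exponent $i+aj-a$ is strictly negative, so the right-hand sum and its $u$-derivative tend to $0$ as $x\to+\infty$, uniformly for $u$ in a fixed compact neighbourhood of $u_0$; there the $u$-derivative of the whole expression equals $\alpha+o(1)\neq 0$, so by the implicit function theorem there are $x_0>0$ and a semialgebraic function $u\colon(x_0,+\infty)\to\Rn$ with $u(x)\to u_0$ and $p\bigl(x,u(x)x^{a}\bigr)=0$. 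The graph $\{(x,u(x)x^{a}):x>x_0\}$ is an unbounded connected semialgebraic subset of $\{p=0\}$; after enlarging $x_0$ it is a half-branch at infinity, and since $u(x)x^{a}\sim x^{a}=x^{\theta(S)}$ while distinct edges of a convex polygon have distinct slopes, Lemmas~\ref{edges} and~\ref{structure1} identify $S$ as the associated edge. As $u_0$ is real this is a genuine real branch (if $u_0<0$ it merely lies in the quadrant $x>0$, $y<0$, which Theorem~\ref{branches} accommodates after a change of sign).

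With this branch in hand, Theorem~\ref{branches} applies and yields a glacial tongue with a straight border for $p$, and Theorem~\ref{glacial} completes the proof. I expect the only real obstacle to be the branch construction, and more precisely the bookkeeping in the implicit function step: the terms of $\tilde p(x,ux^a)$ carry honestly negative powers of $x$ but are polynomial in $u$, so one must first confine $u$ to a compact neighbourhood of $u_0$ before the estimates and the implicit function theorem can be invoked; a naive attempt to solve for $u$ over all of $\Rn$ would fail.
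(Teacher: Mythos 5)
Your overall strategy (manufacture a real branch at infinity associated with $S$ and then invoke Theorem~\ref{branches}) is exactly the paper's, but the execution breaks down at the very first step: you have misread the geometry of the edge. ``Positive inclination'' refers to the slope of the edge $S$ itself as a segment in the $(i,j)$-plane, not to the slope $\theta(S)=v_2/v_1$ of its outward normal. So $S$ runs from $(0,1)$ \emph{upward} to a point $(a,b)$ with $a\geq 1$, $b\geq 2$, and consequently $\theta(S)=-a/(b-1)<0$; this is visible in Example~\ref{list}, where the relevant edges of $p_1,\dots,p_4$ end at $(1,2)$, $(1,3)$, $(2,2)$, $(2,2)$, never on the $x$-axis. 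Your reading, with $S$ ending at $(a,0)$ and $\theta(S)=a>0$, cannot be the intended one, because then the statement would be false: $p=y-x^2$ has Newton polygon the segment from $(0,1)$ to $(2,0)$ with outward normal $(1,2)$ and no interior lattice points, yet $q=-x$ satisfies $\Jac(p,q)\equiv 1$, so $p$ has a real Jacobian mate. (It is also incompatible with the proof of Theorem~\ref{branches}, which needs $S$ to be the right outer edge of \emph{smallest} slope and needs all right outer edges of $\Delta(p-t)$ to have slopes larger than $\theta(S)$ --- true when $S$ emanates from $(0,1)$ with positive geometric slope, false in your configuration.)

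Because of this, the core of your argument evaporates: the part of $p$ supported on $S$ is not $\alpha y+\beta x^a$ but $\alpha y+\beta x^ay^b$ with $b\geq 2$, which is \emph{not} linear in $y$, and the branches associated with $S$ decay like $x^{-a/(b-1)}$ rather than grow like $x^a$. The genuinely nontrivial point of the corollary --- the one your proposal never confronts --- is that the face equation $\alpha y+\beta x^ay^b=0$ must have a nonzero \emph{real} solution of the right asymptotic type, which involves sign and parity considerations in $a$ and $b-1$. The paper deals with this by normalizing the two coefficients to $1$ and $-1$ (rescaling $x$, $y$, $p$) and then running a sign-change argument: on the curves $(x,y)=(ct^{b-1},t^{-a})$ one has $p=(c^a-1)t^{-a}+\cdots$, so choosing $c$ slightly below and slightly above $1$ produces two functions $0<f_1<f_2$ with $f_i(x)\sim x^{\theta(S)}$ on whose graphs $p$ has opposite signs, and Lemma~\ref{structure1} then forces a half-branch of $p=0$ with $g(x)\sim x^{\theta(S)}$, whose associated edge must be $S$ by Lemma~\ref{edges}. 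Your implicit-function idea could be salvaged in this setting (substitute $y=ux^{-a/(b-1)}$, or ramify $x=s^{b-1}$, and solve near a simple real root $u_0$ of $\alpha u+\beta u^b=0$), but you would first have to prove that such a real root compatible with the half-plane $x>x_0$ exists --- and that existence is precisely the content your proposal is missing.
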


\begin{proof} 
It is enough to prove that there exists a branch at infinity of the curve $p=0$ associated 
with the edge $S$ satisfying assumptions of Corollary~\ref{edge}. 
Let $(0,1)$, $(a,b)$ be the endpoints of  $S$. 
Then the polynomial $p$ has two terms $Ax^ay^b$ and $By$
corresponding to the lattice points of $S$. Multiplying $x$, $y$ and $p$ by nonzero constants, 
we may reduce our considerations to $A=1$ and $B=-1$.   
Substituting $(x(t),y(t))=(ct^{b-1},t^{-a})$ 
we get  $p(x(t),y(t))=(c^a-1)t^{-a} +  \mbox{ terms of lower degrees}$. Hence, the sign 
of the polynomial $p$ on the curve $(x(t),y(t))$ for large $t$ depends on the sign of $c^a-1$. 
The curve $(x(t),y(t))$ for $t>0$ is a graph of a function. 
By the appropriate choice of $c$ we can find two functions $f_1$ $f_2$ such that 
$0<f_1<f_2$, $f_1(x) \sim f_2(x) \sim x^{\theta(S)}$, $p$ has negative values 
on the graph of $f_1$ and has positive values on the graph of $f_2$. By Lemma~3 this can happen if and only if there is a half-branch at infinity of the curve $p=0$ which is a graph of a function $g$ with 
$g(x)\sim x^{\theta(S)}$.
\end{proof}

\medskip
\noindent
\textbf{Remark.} 
Using toric modifications of the real plane one can present a shorter proof of Corollary~1.

\begin{Example}\label{list}
Every polynomial from the list: 
$p_1=y+xy^2+y^4$,
$p_2=y+ay^2+xy^3$,
$p_3=y+x^2y^2$,
$p_4=y+ay^2+y^3+x^2y^2$ satisfies assumptions of Corollary~\ref{edge}. 

The Newton polygons of these polynomials are drawn below.
\end{Example}

\setlength{\unitlength}{12pt}
\begin{picture}(24,5)(0,0)
\newsavebox{\sn}
\thinlines
\savebox{\sn}(6,5){\put(0,0){\vector(1,0){5}}  \put(0,0){\vector(0,1){5}}
\put(0,0){\makebox(0,0){.}} 
\put(1,0){\makebox(0,0){.}}
\put(2,0){\makebox(0,0){.}} 
\put(3,0){\makebox(0,0){.}}
\put(0,1){\makebox(0,0){.}}
\put(1,1){\makebox(0,0){.}} 
\put(2,1){\makebox(0,0){.}}
\put(3,1){\makebox(0,0){.}} 
\put(0,2){\makebox(0,0){.}}
\put(1,2){\makebox(0,0){.}} 
\put(2,2){\makebox(0,0){.}}
\put(0,3){\makebox(0,0){.}} 
\put(1,3){\makebox(0,0){.}}
\put(0,4){\makebox(0,0){.}} 
}

\put(0,0){\usebox{\sn}}
\put(0,0)
{\begin{picture}(5,4)(0,0)
\put(2.5,2.5){$\Delta(p_1)$}
\thicklines
\put(0,1){\line(1,1){1}}
\put(0,4){\line(1,-2){1}}
\put(0,1){\line(0,1){3}}
\end{picture}}

\put(7,0){\usebox{\sn}}
\put(7,0)
{\begin{picture}(5,4)(0,0)
\put(2.5,2.5){$\Delta(p_2)$}
\thicklines
\put(0,1){\line(1,2){1}}
\put(0,2){\line(1,1){1}}
\put(0,1){\line(0,1){1}}
\end{picture}}

\put(14,0){\usebox{\sn}}
\put(14,0)
{\begin{picture}(5,4)(0,0)
\thicklines
\put(2.5,2.5){$\Delta(p_3)$}
\put(0,1){\line(2,1){2}}
\end{picture}}

\put(21,0){\usebox{\sn}}
\put(21,0)
{\begin{picture}(5,4)(0,0)
\put(2.5,2.5){$\Delta(p_4)$}
\thicklines
\put(0,1){\line(2,1){2}}
\put(0,3){\line(2,-1){2}}
\put(0,1){\line(0,1){2}}
\end{picture}}
\end{picture}

\bigskip
The polynomials in the above example are taken from~\cite{BO}. 
Theorem~1.3 in the cited paper states that these polynomials are canonical forms, 
up to affine substitution of polynomials of degree~4 
without critical points and with at least one disconnected level set. 
Theorem~5.5 says that none of these polynomials has a real Jacobian mate. 
The method of its proof uses an integration  based on Green's formula 
and requires an analysis of each case separately.


\begin{thebibliography}{99}


\bibitem{BR} B. Benedettini, J.~J.~Risler, 
{\em Real algebraic and semi-algebraic sets}, Hermann, Paris, 1990.

\bibitem{BF} F. Braun, J. R. dos Santos Filho, 
{\em The real jacobian conjecture on $\Rn^2$ is true when one of the components has degree 3}, 
Discrete Contin. Dyn. Syst. 26, 75--87 (2010)

\bibitem{BO} F.~Braun,  B.~Or\'efice-Okamoto,  
{\em On polynomial submersions of degree $4$ and the real jacobian conjecture in $\Rn^2$}, 
arXiv:1406.7683

\bibitem{Mi} J.~W.~Milnor, 
{\em Singular Points of Complex Hypersurfaces}, 
Princeton Univ. Press, Princeton, 1968.

%

\end{thebibliography}
\end{document}